\newtheorem{theorem}{Theorem}[section]
\newtheorem{lemma}[theorem]{Lemma}
\newtheorem{corollary}[theorem]{Corollary}
\newtheorem{conjecture}[theorem]{Conjecture}
\newtheorem*{claimstar}{Claim}
\theoremstyle{definition}
\newtheorem{definition}[theorem]{Definition}
\newtheorem{example}[theorem]{Example}
\newtheorem{question}[theorem]{Question}
\theoremstyle{remark}
\newcommand{\mc}[1]{\mathcal{#1}}
\newcommand{\fneg}{\mathord{\sim}}
\DeclareMathOperator{\PC}{PC}
\newcommand{\ba}{\bold{a}}
\begin{document}

\title{Which classes of structures are both pseudo-elementary and definable by an infinitary sentence?}
\author{Will Boney, Barbara F. Csima\thanks{Partially supported by Canadian NSERC Discovery Grant 312501.}, Nancy A. Day, Matthew Harrison-Trainor\thanks{Supported by an NSERC Banting Fellowship.}}

\makeatletter
\def\blfootnote{\xdef\@thefnmark{}\@footnotetext}
\makeatother

\blfootnote{This work grew out of initial discussions with Vakili about the generality of expressing properties not definable in first-order logic in a pseudo-elementary way, and whether such phenomena might be of use for model checking (as the pseudo-elementary definability of graph reachability was used for model checking by Vakili in his thesis \cite{Vakili16} and with the third author in \cite{VakiliDay14}). We thank one of the referees for pointing us towards some very helpful references.}

\maketitle

\begin{abstract}
When classes of structures are not first-order definable, we might still try to find a nice description. There are two common ways for doing this. One is to expand the language, leading to notions of pseudo-elementary classes, and the other is to allow infinite conjuncts and disjuncts. In this paper we examine the intersection. Namely, we address the question: Which classes of structures are both pseudo-elementary and $\mc{L}_{\omega_1, \omega}$-elementary? We find that these are exactly the classes that can be defined by an infinitary formula that has no infinitary disjunctions.
\end{abstract}

\section{Introduction}

It is well-known that many properties are not definable in elementary first-order logic, even by a theory rather than a single sentence. Common examples are the property (of graphs) of being connected, the property (of abelian groups) of being torsion, and the property (of linear orders) of being well-founded. To capture such properties, one can pass to extensions of elementary first-order logic. This paper is about a characterization of the common expressive power of two such extensions.

The first extension of elementary first-order logic that we consider is to allow countably infinite conjunctions and disjunctions; this is, morally, similar to allowing quantifiers over the (standard) natural numbers. One can then define properties such as being torsion by saying ``for each group element $x$, there is $n$ such that $nx = 0$'', or formally,
\[ (\forall x) \bigdoublevee_{n \in \mathbb{N}} nx = 0.\]
This infinitary logic is known as $\mc{L}_{\omega_1, \omega}$. One loses compactness, but gains other powerful tools. For example, every countable structure is described, up to isomorphism among countable structures, by a sentence of $\mc{L}_{\omega_1, \omega}$ \cite{Scott65}.

The second extension of elementary first-order logic is to allow existential second-order quantifiers. For example, the property of a linear order being non-well-founded can be defined by the sentence ``there is a set with no least element''. We say that such a property is pseudo-elementary. More formally, a property $P$ of $\tau$-structures is pseudo-elementary if there is an expanded language $\tau^* \supseteq \tau$ and an $\tau^*$-sentence $\varphi$ (or $\tau^*$-theory $T$) such that the $\tau$-structures admitting an $\tau^*$-expansion to a model of $\varphi$ (respectively $T$) are exactly the structures satisfying $P$. We will describe both of these extensions of first-order logic in more detail later.

These two extensions of elementary first-order logic have different descriptive powers. For example, the property of being non-well-founded is pseudo-elementary but not $\mc{L}_{\omega_1, \omega}$-definable. Also, the negation of a pseudo-elementary property is not necessarily pseudo-elementary, but the negation of an $\mc{L}_{\omega_1, \omega}$-definable property is again $\mc{L}_{\omega_1, \omega}$-definable. Nevertheless, there are properties that are not elementary first-order definable, but that are both pseudo-elementary and $\mc{L}_{\omega_1, \omega}$-definable. The property of a graph being disconnected is such an example; we provide a more detailed discussion of various examples in Section \ref{sec:ex}. The main result of this paper is a complete classification of such properties.

\begin{theorem}\label{thm:main}
Let $\mathbb{K}$ be a class of structures closed under isomorphism. The following are equivalent:
\begin{enumerate}
	\item $\mathbb{K}$ is both a pseudo-elementary \textup{(}$\PC_{\Delta}$\textup{)} class and defined by an $\mc{L}_{\omega_1, \omega}$-sentence.
	\item $\mathbb{K}$ is defined by a $\bigdoublewedge$-sentence,
\end{enumerate}
\end{theorem}

\noindent Theorem \ref{thm:main} follows immediately from Corollary \ref{cor:both-to-pc} (for (1)$\Rightarrow$(2)) and Theorem \ref{thm:2-1} (for (2)$\Rightarrow$(1)).

There is some notation in this theorem that we must explain. First, there are some subtleties in the definition of what it means for a property to be pseudo-elementary, and in fact there are four different natural definitions (giving rise to three distinct notions; these are discussed in Section \ref{pec-ssec}). Two of them are as follows.

\begin{definition}
We say that a class $\mathbb{K}$ of $\tau$-structures is a $\PC$-class if there is a language $\tau^* \supseteq \tau$ and an elementary first-order $\tau^*$-sentence $\phi$ such that
\[ \mathbb{K} = \{ \mc{M} \mid \text{there is an $\tau^*$-structure $\mc{M}^*$ expanding $\mc{M}$ with $\mc{M}^* \models \phi$}\}.\]
We say that $\mathbb{K}$ is a $\PC_\Delta$-class if $\phi$ is replaced by an elementary first-order theory.
\end{definition}

\noindent So the theorem above is concerned with pseudo-elementary classes where one is allowed to use a theory in the definition.

The $\bigdoublewedge$-sentences in the theorem are the $\mc{L}_{\omega_1, \omega}$-sentences which (in normal form) involve infinitary conjunctions, but no infinitary disjunctions. For example, the property of being infinite is definable by the $\bigdoublewedge$-sentence
\[ \bigdoublewedge_{n \in \mathbb{N}} \exists x_1,\ldots,x_n (\bigwedge_{i \neq j} x_i \neq x_j). \]
The negation, the property of being finite, is $\mc{L}_{\omega_1, \omega}$-definable by the sentence
\[ \bigdoublevee_{n \in \mathbb{N}} \forall x_1,\ldots,x_n (\bigvee_{i \neq j} x_i = x_j) \]
but this sentence is not a $\bigdoublewedge$-sentence because it involves an infinitary disjunct.

\begin{definition}
The $\bigdoublewedge$-formulas are defined inductively as follows:
\begin{itemize}
	\item every finitary quantifier-free formula is a $\bigdoublewedge$-formula
	\item if $\varphi$ is a $\bigdoublewedge$-formula, then so are $(\exists x) \varphi$ and $(\forall x) \varphi$
	\item if $(\varphi_i)_{i \in \omega}$ are $\bigdoublewedge$-formulas with finitely many free variables, then so is $\bigdoublewedge_{i \in \omega} \varphi_i$.
\end{itemize}
\end{definition}

The proof of (1)$\Rightarrow$(2) uses an argument inspired by the proof of Craig Interpolation for $\mc{L}_{\omega_1, \omega}$. This was originally proved by Lopez-Escobar \cite{LopezEscobar65} who also gave the following corollary: a property which is both pseudo-elementary and co-pseudo-elementary with respect to $\mc{L}_{\omega_1, \omega}$ (i.e., both $\mathbf{\Sigma}^1_1$ and $\mathbf{\Pi}^1_1$) is actually $\mc{L}_{\omega_1, \omega}$-definable.

In the direction (2)$\Rightarrow$(1), there are several possible proofs. We give the simplest and shortest argument in Section \ref{sec:simple}. A second proof is to note that any $\bigdoublewedge$-sentence is equivalent to a closed game formula, and these are known to $\PC_\Delta$ \cite{Kolaitis}. We describe this in Section \ref{sec:game}.
\noindent A third proof, for which we do not give the details, proceeds by coding computable formulas in models of weak arithmetic. This is an approach that was taken by Craig and Vaught \cite{CraigVaught} to prove:
\begin{theorem}[Craig and Vaught \cite{CraigVaught}]\label{cor:comp ax gives pc'}
	Every computably axiomatizable class in a finite language is pseudo-elementarily defined using a single sentence \textup{(}$PC'$\textup{)}.
\end{theorem}
A result on closed game formulas in \cite{Barwise} gives:
\begin{theorem}\label{thm:comp}
	Let $\mathbb{K}$ be a class of structures in a finite language that is axiomatized by a computable $\bigdoublewedge$-sentence. Then $\mathbb{K}$ is pseudo-elementarily defined using a single sentence \textup{(}$PC'$\textup{)}.
\end{theorem}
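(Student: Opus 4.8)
The plan is to realize $\mathbb{K}$ as a $PC'$-class by adjoining to $\tau$ a copy of the natural numbers that carries a code for the (well-founded, computable) syntax tree of the axiomatizing $\bigdoublewedge$-sentence $\varphi$, together with a satisfaction predicate for that tree. Concretely I would work in a many-sorted expansion $\tau^*$ with three sorts: a \emph{structure} sort $S$ carrying the original $\tau$-structure $\mc{M}$; a \emph{number} sort $N$ equipped with the symbols of arithmetic; and an \emph{assignment} sort $A$ with an evaluation map $e\colon A\times N\to S$ and an update map $\mathrm{upd}\colon A\times N\times S\to A$ governed by the single axiom $e(\mathrm{upd}(\sigma,i,b),j)=b$ if $i=j$ and $e(\sigma,j)$ otherwise. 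On top of this I add a term-valuation function $\mathrm{val}\colon N\times A\to S$ and a satisfaction predicate $\mathrm{Sat}\subseteq N\times A$, whose intended readings are ``$\mathrm{val}(t,\sigma)$ is the value of the term coded by $t$ under the assignment $\sigma$'' and ``the subformula coded by $n$ holds of $\sigma$''. Because $\varphi$ is computable, the syntactic bookkeeping relations — ``$n$ codes an atomic formula / an $\exists$-node / a $\forall$-node / a $\bigdoublewedge$-node'', the bound-variable function, the $i$-th-conjunct function $(n,i)\mapsto\mathrm{sub}(n,i)$, and the term-decoding relations — are all computable.

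The key point, and the reason a \emph{single} sentence suffices, is to take the arithmetic on $N$ to be a finitely axiomatizable base theory such as Robinson's $Q$ and to invoke representability: every computable relation and function is representable in $Q$, so each syntactic relation above is named by an arithmetic formula that $Q$ decides correctly on every standard input. I would then let $\Phi$ be the conjunction of (i) the finitely many axioms of $Q$; (ii) the assignment-sort axiom; (iii) the Tarski clauses for $\mathrm{val}$ and $\mathrm{Sat}$, written as universally quantified implications guarded by the representing formulas — for the atomic and function-symbol clauses this is a \emph{finite} list precisely because the language $\tau$ is finite; and (iv) the assertion $\mathrm{Sat}(n_0,\varnothing)$ that the root node $n_0$ (the numeral for the code of $\varphi$) holds of the empty assignment. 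This $\Phi$ is a single first-order $\tau^*$-sentence. For the easy direction, if $\mc{M}\models\varphi$ I expand $\mc{M}$ by taking $N=\N$ standard, interpreting the coding genuinely, and letting $\mathrm{val},\mathrm{Sat}$ be actual term-value and actual satisfaction along the well-founded tree of $\varphi$; all clauses of $\Phi$ then hold, witnessing that $\mc{M}$ lies in the $PC'$-class defined by $\Phi$.

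For the converse I must show that \emph{every} expansion $\mc{M}^*\models\Phi$ — in particular one whose number sort $N$ is a nonstandard model of $Q$ — has $\mc{M}\models\varphi$; this, together with the single-sentence packaging, is the main obstacle. I would prove by induction on the genuine (standard, well-founded) syntax tree of $\varphi$ the statement: for every standard subformula $\psi$ with code $n$ and every $\sigma\in A$, if $\mathrm{Sat}(n,\sigma)$ holds in $\mc{M}^*$ then $\mc{M}\models\psi$ under the assignment read off from $\sigma$ by $e$. The induction runs entirely in the metatheory over standard nodes, so no internal induction or well-foundedness of $N$ is needed; representability guarantees that the guards fire correctly at each standard node, and the atomic case uses a parallel induction showing $\mathrm{val}$ computes standard terms correctly. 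At an $\exists$-node the clause hands back a genuine witness $b\in S$, and at a $\forall$- or $\bigdoublewedge$-node the clause gives $\mathrm{Sat}$ at every child, \emph{in particular at every standard child}, which is exactly what the induction hypothesis consumes. It is here that the absence of infinitary disjunctions is essential: a $\bigdoublevee$-node would only yield a possibly \emph{nonstandard} index $i$ with $\mathrm{Sat}(\mathrm{sub}(n,i),\sigma)$, pointing at a nonstandard code to which no induction hypothesis applies. Starting the induction at the root with clause (iv) then yields $\mc{M}\models\varphi$, completing the proof that the $PC'$-class defined by $\Phi$ is exactly $\mathbb{K}$.
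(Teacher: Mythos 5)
Your proposal is correct, but it takes a genuinely different route from the paper's. The paper never introduces a satisfaction predicate with Tarski clauses; instead (Lemmas \ref{lem:main-lemma} and \ref{lem:comp-wedge}) it expresses the truth of $\varphi$ by a $\Pi_1$ sentence over a three-sorted structure $(\mc{M},\mathbb{N},M^{<\mathbb{N}})$ with $PA^-$ on the number sort and a sort of tuples: the sentence $\theta$ says that along every root-to-leaf path through the syntax tree, every valuation sequence consistent with the path (existential witnesses being supplied by an explicit Skolem function $g \colon N \times M_{\tup} \to M$) makes the leaf's quantifier-free formula true. The converse direction is then handled not by your external induction over standard nodes but by absoluteness: every model of $T_{\tup}$ end-extends $\mc{M}^{<\mathbb{N}}$ (Lemma \ref{lem:end extension}), $\Pi_1$ sentences transfer down to $\mc{M}^{<\mathbb{N}}$ (Lemma \ref{lem:pi_1}), and in the standard model a failure of $\varphi$ produces, by well-foundedness of the formula code, a finite failing path refuting $\theta$. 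Your route --- Robinson's $Q$, representability, and a metatheoretic induction on the genuine syntax tree --- avoids the $\Pi_1$ normal-form engineering and the end-extension lemmas entirely, and is arguably more direct for this theorem in isolation. Note, though, that both arguments turn on exactly the same crux, which you identify precisely: quantifier witnesses must land in the structure sort, which is standard in every expansion, whereas a $\bigdoublevee$-witness would be an index in the number sort and could be nonstandard; the paper encodes this in its insistence that $\ran(g) \subseteq M$, ``the only sort which is fixed under end extensions.'' What the paper's heavier machinery buys is reuse: the same $\Pi_1$ format, relativized to an oracle predicate $R$ on the number sort, yields Lemma \ref{lem:main-lemma} for arbitrary $C$-computable $\bigdoublewedge$-sentences and hence the $\PC_{\Delta}$ direction of Theorem \ref{thm:main}, which your $Q$-representability argument does not directly give when the syntax is undecidable. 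Two routine details to tidy in your write-up: the leaves of the paper's formula codes are finite disjunctions of atomic and negated atomic formulas, so you need clauses for finite Boolean combinations, not just single atomic formulas (harmless, since a finite disjunction hands back one of finitely many \emph{standard} children); and your many-sorted $\Phi$ should be recast in the paper's one-sorted $\PC'$ format with the unary predicate $P$ carving out the structure sort, which is standard for a finite language.
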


\noindent Indeed, Theorem \ref{cor:comp ax gives pc'} is a corollary of Theorem \ref{thm:comp}. Unfortunately, we do not know how to reverse Theorem \ref{thm:comp}. We conjecture:
\begin{conjecture}
A $\PC$ or $\PC'$ class which is also $\mc{L}_{\omega_1, \omega}$-axiomatizable is axiomatizable by a computable $\bigdoublewedge$-sentence.
\end{conjecture}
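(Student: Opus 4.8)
The plan is to read the conjecture as an \emph{effectivity} statement layered on top of Theorem~\ref{thm:main}. Since every $\PC$ or $\PC'$ class is a fortiori a $\PC_\Delta$ class, the hypotheses already fall under Theorem~\ref{thm:main}, so we know for free that $\mathbb{K}$ is defined by \emph{some} $\bigdoublewedge$-sentence; the only new content is \emph{computability}. I would move to the descriptive-set-theoretic picture, coding $\tau$-structures with universe $\omega$ as points of Cantor space, and assuming (as in Theorems~\ref{thm:main} and \ref{thm:comp}) that the language is finite, or at least computably presented. The $\PC$/$\PC'$ hypothesis supplies a single finitary sentence $\phi$ in a finite expansion $\tau^*$, and ``$\mc{M}$ has a $\tau^*$-expansion satisfying $\phi$'' has the form $\exists X\,\theta(X,\mc{M})$ with $\theta$ arithmetical and $\phi$ fixed and finite; hence $\mathbb{K}$ is a \emph{lightface} $\Sigma^1_1$ class. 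On the other side, $\mc{L}_{\omega_1\omega}$-axiomatizability makes $\mathbb{K}$ an isomorphism-invariant \emph{boldface} Borel class, by Lopez-Escobar's theorem. Thus the target reduces to a purely effective question: is a lightface $\Sigma^1_1$, isomorphism-invariant, boldface Borel class necessarily lightface $\Delta^1_1$?

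Granting an affirmative answer, the rest should go through along standard lines. If $\mathbb{K}$ is lightface $\Delta^1_1$ and invariant, then it is lightface $\Sigma^0_\alpha$ for some $\alpha<\omega_1^{CK}$ (the effective Borel hierarchy exhausts $\Delta^1_1$ below the Church--Kleene ordinal), and an effective form of the Lopez-Escobar theorem in the style of Vaught's analysis of invariant sets yields a \emph{computable} $\mc{L}_{\omega_1\omega}$-sentence defining $\mathbb{K}$, i.e.\ one lying in $L_{\omega_1^{CK}}$. I would then re-run the forward direction of Theorem~\ref{thm:main} with this effective input in place of an arbitrary $\mc{L}_{\omega_1\omega}$-axiom. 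That argument proceeds through an $\mc{L}_{\omega_1\omega}$-interpolation (a modification of Lopez-Escobar's), and infinitary interpolation is itself effective on admissible fragments by Barwise's work; so feeding the interpolation the finite sentence $\phi$ together with a computable $\mc{L}_{\omega_1\omega}$-axiom should produce an interpolant whose conjunction structure is recursive --- that is, a computable $\bigdoublewedge$-sentence. The bookkeeping here is routine once effectivity of the interpolation step is granted.

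The heart of the matter, and where I expect the real difficulty, is the effective reduction isolated in the first paragraph: passing from lightface $\Sigma^1_1$-plus-Borel to lightface $\Delta^1_1$. Equivalently, one must show the complement $\mathbb{K}^c$ is lightface $\Sigma^1_1$, for then $\mathbb{K}$ is lightface $\Pi^1_1$ and, being also lightface $\Sigma^1_1$, is $\Delta^1_1$ by effective Suslin--Kleene. The natural attack is a boundedness argument: write $\mathbb{K}=p[T]$ for a recursive tree $T$ and use Borelness of $\mathbb{K}$ to bound the associated $\Pi^1_1$-rank of the complement, hoping that invariance and the finiteness of $\phi$ force that bound below $\omega_1^{CK}$. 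The obstruction is that this passage is \emph{not automatic}: one has no a priori recursive bound on the Borel rank of the boldface $\mc{L}_{\omega_1\omega}$-definition, which may sit at rank $\geq\omega_1^{CK}$, and the existential second-order quantifier hidden in the $\PC$-definition can have witnessing expansions of arbitrarily high hyperdegree, so effective analyticity of $\mathbb{K}$ need not transfer to effective \emph{co}-analyticity of $\mathbb{K}$. Making the argument work would require genuinely exploiting the interplay between isomorphism-invariance and the single-sentence $\PC$-structure --- plausibly via a Barwise-compactness or admissibility argument showing that the Scott analysis of $\mathbb{K}$ closes off at a recursive ordinal --- and it is precisely this effective-boundedness step that I expect to be the crux, and the reason the authors record the statement only as a conjecture.
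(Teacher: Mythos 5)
You should know at the outset that the paper contains no proof of this statement: the authors explicitly write that they ``do not know how to reverse these results'' and record it only as a conjecture, so there is no paper argument to compare against, and the real question is whether your proposal closes the gap. It does not, and to your credit you say so yourself: the pivotal step --- that an isomorphism-invariant class which is lightface $\Sigma^1_1$ (from the single $\PC$-sentence) and boldface Borel (from Lopez-Escobar) must be lightface $\Delta^1_1$ --- is left entirely unestablished. Without invariance the implication fails outright, since a lightface $\Sigma^1_1$ set can be boldface Borel without being hyperarithmetic, there being no recursive bound on the Borel rank of the boldface definition; whether invariance together with the single-sentence $\PC$ structure rescues the boundedness argument is precisely the open content of the conjecture, not a lemma one may defer. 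Your reformulation of the problem as an effective-boundedness question is a sensible framing, but it is a restatement of the difficulty, not a resolution of it.

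Moreover, even granting that crux, your endgame conflates two distinct effectivity notions and so would still not deliver the conjectured conclusion. In this paper ``computable $\bigdoublewedge$-sentence'' means a sentence with a \emph{computable} formula code $(T,l,v)$; the effective Lopez-Escobar/Vaught machinery and Barwise interpolation on the admissible fragment $L_{\omega_1^{CK}}$ that you invoke produce sentences lying in that fragment, i.e.\ with \emph{hyperarithmetic} codes, and hyperarithmetic codes are strictly weaker than computable ones (the paper itself flags the analogous distinction when it notes that a sentence can be computable $\mc{L}_{\omega_1\omega}$ and a $\bigdoublewedge$-sentence without being a computable $\bigdoublewedge$-sentence). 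Separately, off-the-shelf admissible-fragment interpolation says nothing about preserving the disjunction-free $\bigdoublewedge$ shape; you would need an effective version of the paper's own variant interpolation theorem (the one strengthening Craig interpolation to produce a $\bigdoublewedge$-interpolant), which is not known and is not supplied by citing Barwise. So beyond the acknowledged crux there are at least two further genuine gaps --- hyperarithmetic versus computable codes, and effectivity of the $\bigdoublewedge$-preserving interpolation --- and the ``routine bookkeeping'' claim understates both.
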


The argument in Section \ref{sec:simple} for (2)$\Rightarrow$(1) of Theorem \ref{thm:main} goes through for $\bigdoublewedge$-sentences of $\mc{L}_{\kappa,\omega}$ for any $\kappa$. However, we do not know if (1)$\Rightarrow$(2) holds for $\mc{L}_{\kappa,\omega}$ for $\kappa> \omega_1$.
\begin{question} \label{beyond-quest}
	For $\kappa > \omega_1$, is every $\PC_\Delta$ class defined by an $\mc{L}_{\kappa,\omega}$ sentence actually defined by a $\bigdoublewedge$-sentence?
\end{question}
\noindent We note that interpolation fails in $\mc{L}_{\omega_2,\omega}$ \cite[Theorem 4.2]{m-infanal}.  Intriguingly, Malitz goes on to give a proof system for $\mc{L}_{\kappa, \omega}$ that goes through $\mc{L}_{(2^{<\kappa})^+, \kappa}$ that gives rise to an interpolation theorem \cite[Section 5]{m-infanal}.  Shelah \cite{sh797} uses this to define a logic $\mc{L}^1_\kappa$ that is intermediate between $\mc{L}_{\kappa, \omega}$ and $\mc{L}_{\kappa, \kappa}$ that has interpolation and other nice properties (when $\kappa = \beth_\kappa$).  This suggests the right answer to Question \ref{beyond-quest} goes through $\mc{L}^1_\kappa$ instead of $\mc{L}_{\kappa, \omega}$.  However, this logic lacks any syntax in the normal sense (formulas are defined by the existence of winning strategies in a delayed Ehrenfeucht-Fraisse game), which causes additional problems, e.g., it is not clear what a $\bigdoublewedge$-sentence should mean, or what Skolem functions should look like.

\section{Notation and Definitions}\label{sec:nota-def}

\subsection{Infinitary Logic}

For the most part, we follow Marker's book \cite{Marker16}. We want to be precise with our definitions here, because we will need to encode infinitary formulas in first-order sentences. We first define $\mc{L}_{\omega_1, \omega}$-formulas. Throughout the paper, let $\tau$ be a countable language.

\begin{definition}
The $\mc{L}_{\omega_1, \omega}(\tau)$-formulas are defined inductively as follows:
\begin{itemize}
	\item every atomic $\tau$-formula is an $\mc{L}_{\omega_1, \omega}(\tau)$-formula,
	\item if $\varphi$ is an $\mc{L}_{\omega_1, \omega}(\tau)$-formula, then so are $\neg \varphi$, $(\exists x) \varphi$ and $(\forall x) \varphi$,
	\item if $(\varphi_i)_{i \in \omega}$ are $\mc{L}_{\omega_1, \omega}(\tau)$-formulas with finitely many free variables, then so are $\bigdoublewedge_{i \in \omega} \varphi_i$ and $\bigdoublevee_{i \in \omega} \varphi_i$.
\end{itemize}
\end{definition}

\noindent In general, we will drop the reference to $\tau$ when it is clear what we mean.

\begin{definition}
An $\mc{L}_{\omega_1, \omega}$-formula is in \emph{$\mc{L}_{\omega_1, \omega}$ normal form} if the $\neg$ only occurs applied to atomic formulas.
\end{definition}

\noindent Every $\mc{L}_{\omega_1, \omega}$-fromula can be placed into a normal form. The negation $\neg \varphi$ of a sentence $\varphi$ in normal form is not immediately in normal form itself. This gives rise to the formal negation $\fneg \varphi$, which is logically equivalent to $\neg \varphi$ but is in normal form.

\begin{definition}
For any $\mc{L}_{\omega_1, \omega}$-formula $\varphi$, the formula $\fneg \varphi$ is defined inductively as follows:
\begin{itemize}
	\item if $\varphi$ is atomic, $\fneg \varphi$ is $\neg \varphi$,
	\item $\fneg \neg \varphi$ is $\varphi$, $\fneg (\exists x) \varphi$ is $(\forall x) \fneg \varphi$ and $\fneg (\forall x) \varphi$ is $(\exists x) \fneg \varphi$,
	\item $\fneg \bigdoublewedge_{i \in \omega} \varphi_i$ is $\bigdoublevee_{i \in \omega} \fneg \varphi_i$ and $\fneg \bigdoublevee_{i \in \omega} \varphi_i$ is $\bigdoublewedge_{i \in \omega} \fneg \varphi_i$.
\end{itemize}
\end{definition}

\noindent We repeat again the definition of a $\bigdoublewedge$-formula.

\begin{definition}
An $\mc{L}_{\omega_1, \omega}$-formula $\varphi$ is a $\bigdoublewedge$-formula if it can be written in normal form without any infinite disjunctions. More concretely, the $\bigdoublewedge$-formulas are defined inductively as follows:
\begin{itemize}
	\item every finitary quantifier-free formula is a $\bigdoublewedge$-formula,
	\item if $\varphi$ is a $\bigdoublewedge$-formula, then so are $(\exists x) \varphi$ and $(\forall x) \varphi$,
	\item if $(\varphi_i)_{i \in \omega}$ are $\bigdoublewedge$-formulas with finitely many free variables, then so is $\bigdoublewedge_{i \in \omega} \varphi_i$.
\end{itemize}
\end{definition}

An $\mc{L}_{\omega_1, \omega}$ (or $\bigdoublewedge$-) formula is computable if, essentially, there is a computable syntactic representation of the formula (see \cite{AshKnight00}).

\subsection{Pseudo-elementary Classes} \label{pec-ssec}

In this section, we follow the book by Hodges \cite{Hodges93}. There are four different types of pseudo-elementary classes: $\PC$, $\PC'$, $\PC_\Delta$, and $\PC_\Delta'$. The $\Delta$ means that we are allowed a full theory rather than a single sentence, and the ${}^\prime$ means that we are allowed to add additional sorts (elements) to the structure. The classes $\PC$ and $\PC_{\Delta}$ have already been defined, but we repeat the definition.

\begin{definition}
We say that a class $\mathbb{K}$ of $\tau$-structures is a $\PC$-class if there is a language $\tau^* \supseteq \tau$ and an elementary first-order $\tau^*$ sentence $\phi$ such that
\[ \mathbb{K} = \{ \mc{M} \mid \text{there is a $\tau^*$-structure $\mc{M}^*$ expanding $\mc{M}$ with $\mc{M}^* \models \phi$}\}.\]
We say that $\mathbb{K}$ is a $\PC_\Delta$-class if $\phi$ is replaced by an elementary first-order theory.
\end{definition}

The classes $\PC'$ and $\PC'_\Delta$ are a little more complicated to define. We need the following definition, which one should think of as throwing away a sort from a structure.

\begin{definition}
Let $\tau \subseteq \tau^*$ be a pair of languages, with a unary predicate $P \in \tau^* \setminus \tau$. Given a $\tau^*$-structure $\mc{A}$, we denote by $\mc{A}_P$ the substructure of $\mc{A} \mid \tau$ whose domain is $P^{\mc{A}}$ (if this is a $\tau$-structure; otherwise $\mc{A}_P$ is not defined).
\end{definition}

\noindent The classes $\PC'$ and $\PC'_\Delta$ differ from $\PC$ and $\PC_{\Delta}$ respectively in that in addition to expanding the language, one is allowed to add additional elements.

\begin{definition}
We say that a class $\mathbb{K}$ of $\tau$-structures is a $\PC'$-class if there is a language $\tau^* \supseteq \tau$, with a unary relation $P \in \tau^* \setminus \tau$, and a $\tau^*$-formula $\phi$, such that
\[ \mathbb{K} = \{ \mc{A}_P \mid \text{$\mc{A} \models \phi$ and $\mc{A}_P$ is defined}\}.\]
We say that $\mathbb{K}$ is a $\PC'_\Delta$-class if $\phi$ is a first-order theory.
\end{definition}

\noindent Note that, if the language is finite (or we are dealing with a $\PC'_\Delta$-class) it suffices to ask that
\[ \mathbb{K} = \{ \mc{A}_P \mid \text{$\mc{A} \models \phi$}\} \]
as $\phi$ can say that $\mc{A}_{P}$ is defined.

Though we have four different definitions, they give rise to only three different notions (and only two if we consider classes which consist only of infinite structures).

\begin{theorem}[Theorem 5.2.1 of \cite{Hodges93}]\label{thm:hodges}
Let $\mathbb{K}$ be a class of structures.
\begin{itemize}
	\item $\mathbb{K}$ is a $\PC_{\Delta}$-class if and only if it is a $\PC'_{\Delta}$-class.
	\item If all the structures in $\mathbb{K}$ are infinite, then $\mathbb{K}$ is a $\PC$-class if and only if it is a $\PC'$-class.
\end{itemize}
\end{theorem}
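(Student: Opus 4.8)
The plan is to prove each bi-implication by first disposing of the trivial inclusion and then attacking the substantive one. The inclusions $\PC_\Delta \subseteq \PC'_\Delta$ and (for the second bullet) $\PC \subseteq \PC'$ come for free: given a $\PC_\Delta$ (resp.\ $\PC$) presentation over $\tau^* \supseteq \tau$ by a theory $T$ (resp.\ sentence $\phi$), I would add a fresh unary predicate $P$ and the axiom $\forall x\, P(x)$, so that in every model $\mc{A}$ we have $P^{\mc{A}}$ equal to the whole domain and hence $\mc{A}_P = \mc{A}\res\tau$; this exhibits the same class as a $\PC'_\Delta$ (resp.\ $\PC'$) class. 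All the content lies in the reverse inclusions, where the extra elements of the auxiliary sort $\mc{A} \setminus P^{\mc{A}}$ must be eliminated.

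The heart of the argument is an encoding that relocates the entire witnessing $\tau^*$-structure onto the home domain. Suppose $\mathbb{K}$ is $\PC'_\Delta$ via $(\tau^*, P, T)$, and assume for now that every member is infinite. I would introduce a language $\tau'$ consisting of $\tau$ together with a fresh copy $\hat R$ of every symbol $R \in \tau^*$ (in particular a predicate $\hat P$) and a new unary function symbol $e$, and take $T'$ to assert: (i) the hatted structure $\hat{\mc{A}} = (M; \hat R)_{R \in \tau^*}$ is a model of $T$ (this is just $T$ with every symbol replaced by its hatted copy); and (ii) $e$ is an isomorphism from the given $\tau$-structure onto $\hat{\mc{A}}_{\hat P}$, i.e.\ $e$ is injective, has image exactly $\hat P$, and respects every symbol of $\tau$. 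An expansion of $\mc{M}$ to $\tau'$ then satisfies $T'$ exactly when there is a $\tau^*$-structure on the domain $M$ modelling $T$ whose $\hat P$-part is isomorphic to $\mc{M}$. The forward direction immediately gives $\mc{M} \cong \hat{\mc{A}}_{\hat P} \in \mathbb{K}$. For the reverse direction, starting from any witness $\mc{A} \models T$ with $\mc{A}_P \cong \mc{M}$, I would apply the downward L\"owenheim--Skolem theorem to obtain $\mc{A}^- \preceq \mc{A}$ containing $P^{\mc{A}}$ with $|\mc{A}^-| = |P^{\mc{A}}| = |M|$; since $\mc{M}$ is infinite this cardinality is infinite, so $\mc{A}^-$ can be transported along a bijection onto $M$ to interpret the hatted symbols, and $e$ becomes the transported isomorphism. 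This yields the required expansion.

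For the second bullet this already finishes the proof: there are no finite members, and because a single first-order sentence $\phi$ mentions only finitely many symbols we may assume $\tau$ is finite, so the isomorphism conditions in (ii) amount to finitely many sentences and $T'$ collapses to a single sentence $\phi'$, giving a $\PC$ presentation. The infiniteness hypothesis is used exactly at the L\"owenheim--Skolem step, which cannot push the witness below $\aleph_0$. This is the main obstacle, and it is genuine: a finite member may admit only infinite witnesses (for instance when $T$ forces the auxiliary sort to be infinite), and an infinite structure cannot be coded onto a finite domain.

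To recover the first bullet in full I would split $\mathbb{K} = \mathbb{K}_{\mathrm{inf}} \cup \mathbb{K}_{\mathrm{fin}}$ into its infinite and finite members and present each as a $\PC_\Delta$ class, using that $\PC_\Delta$ classes are closed under finite unions (combine the two presentations over a disjoint union of the auxiliary languages together with a nullary flag $c$, guarding one presentation by $c$ and the other by $\neg c$). The class $\mathbb{K}_{\mathrm{inf}}$ is $\PC'_\Delta$ --- add to $T$ the axioms $\exists^{\ge n} x\, P(x)$ forcing $P$ infinite --- and all its members are infinite, so the encoding above applies verbatim. For $\mathbb{K}_{\mathrm{fin}}$ I would use that any $\PC'_\Delta$ class is closed under ultraproducts (reducts and relativizations commute with ultraproducts, and elementary classes are closed under them); consequently its class of finite members, being closed under ultraproducts and isomorphism, is axiomatized over $\tau$ itself by the set $T^{\mathrm{fin}}$ of all first-order $\tau$-sentences true in every finite member. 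The point needing care is that $T^{\mathrm{fin}}$ admits no spurious finite model: given finite $\mc{N} \models T^{\mathrm{fin}}$, for each finite $\tau_0 \subseteq \tau$ some member agrees with $\mc{N}$ on $\tau_0$ (otherwise the negation of the description of $\mc{N}\res\tau_0$ would lie in $T^{\mathrm{fin}}$), and an ultraproduct of these members over a suitable ultrafilter is isomorphic to $\mc{N}$, placing $\mc{N}$ in $\mathbb{K}_{\mathrm{fin}}$. Combining the two pieces via the flag completes the $\PC_\Delta$ presentation of $\mathbb{K}$.
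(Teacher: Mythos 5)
Your two easy inclusions and your proof of the second bullet are essentially correct, and they follow the route the paper itself indicates: the paper only cites Hodges for this theorem, but its remark that for infinite models one can let the elements ``wear two hats'' is precisely your device of cutting a witness $\mc{A} \models \phi$ down by L\"owenheim--Skolem to a model of size $|M|$ containing $P^{\mc{A}}$ and transporting it, hatted symbols and all, onto the domain $M$, with $e$ recording the isomorphism. (Your reduction to finite $\tau$ via the finitely many symbols of $\phi$ is also legitimate and needed there.)

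The first bullet, however, has a genuine gap, located exactly where the paper warns that the proof in Hodges ``is not obvious and quite interesting'': your treatment of $\mathbb{K}_{\mathrm{fin}}$. The class of finite members of a $\PC'_\Delta$-class is \emph{not} closed under ultraproducts once it contains arbitrarily large structures --- an ultraproduct over a countably incomplete ultrafilter of finite members of unbounded size is infinite --- so your assertion that $\mathbb{K}_{\mathrm{fin}}$, ``being closed under ultraproducts and isomorphism, is axiomatized by $T^{\mathrm{fin}}$'' is false: by compactness, $\mathrm{Mod}(T^{\mathrm{fin}})$ contains infinite models whenever $\mathbb{K}_{\mathrm{fin}}$ is unbounded, hence $\mathrm{Mod}(T^{\mathrm{fin}}) \supsetneq \mathbb{K}_{\mathrm{fin}}$. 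Your care about ``spurious finite models'' addresses the wrong side. The class your flagged union actually presents is $\mathbb{K}_{\mathrm{inf}} \cup \mathrm{Mod}(T^{\mathrm{fin}})$, which contains every infinite model of $T^{\mathrm{fin}}$, and you give no argument that such models lie in $\mathbb{K}$. Nor should one expect such an argument to exist in general: by Frayne's theorem an infinite $\mc{N} \models T^{\mathrm{fin}}$ is only guaranteed to embed \emph{elementarily} into an ultraproduct of finite members, and $\PC_\Delta$-classes are closed under neither elementary equivalence nor elementary substructures --- the paper's own example of the non-well-founded linear orders witnesses this, since $\omega$ is an elementary substructure of a non-well-founded model of its theory but is not in the class. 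So membership of these infinite models in $\mathbb{K}$ simply does not follow, and the decomposition strategy breaks at the finite part. There is also a secondary gap in your claim that the encoding applies ``verbatim'' to $\mathbb{K}_{\mathrm{inf}}$: for theories, unlike single sentences, one cannot shrink the auxiliary language to countable size (the $\PC_\Delta$-class of structures of cardinality at least $\aleph_1$, presented with $\aleph_1$ distinct constants, is not $\PC_\Delta$ in any countable language), and if $T$ contains $\aleph_1$ pairwise distinct constants outside $P$ while leaving the $P$-part unconstrained, then $\mathbb{K}_{\mathrm{inf}}$ contains all countably infinite structures but your hatted theory forces $|M| \geq \aleph_1$; the L\"owenheim--Skolem step cannot reach down to $|M|$ when $|M| < |\tau^*|$. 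Both failures indicate why the first equivalence requires the more elaborate argument of Hodges rather than a reduction to the infinite case plus a finite patch.
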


\noindent In Example \ref{ex:pc'} we give a class which is $\PC'$ but not $\PC$.

 The proof of the first point in \cite{Hodges93} is not obvious and quite interesting. For the second, essentially the only reason that $\PC$ and $\PC'$ are different is that the model might be finite; if a model is infinite, one could just have the elements of the model ``wear two hats'', on the one hand being the domain of the expansion of the original model, and on the other hand playing the role of the elements of the new sort $P$.

\section{Examples}\label{sec:ex}

We give here a few examples of properties that are definable in various combinations of expansions of elementary first-order logic, including some applications of the theorems.

\begin{example}\label{ex:graphs}
Let $\tau = \{ R \}$ the language of graphs.
The class $\mathbb{K}$ of non-connected graphs is a $\PC$-class.
Indeed, an undirected graph $\mc{G} = (G,R)$ is disconnected if and only if there is a binary relation $C$ of connectedness such that
\begin{itemize}
	\item $(\forall x)(\forall y) [R(x,y) \rightarrow C(x,y)]$,
	\item $(\forall x)(\forall y)(\forall z)[ C(x,y) \wedge C(y,z) \rightarrow C(x,z)]$, and
	\item $\neg (\forall x)(\forall y) C(x,y)$.
\end{itemize}
An undirected graph $\mc{G}$ is also disconnected if and only if
\[ (\exists x \neq y) \bigdoublewedge_{n \in \omega} (\forall u_0,\ldots,u_n)[x \neq u_0 \vee \neg R(u_0,u_1) \vee \neg R(u_1,u_2) \vee \cdots \vee \neg R(u_{n-1},u_n) \vee u_n \neq y ].\]
So $\mathbb{K}$ is also defined by a $\bigdoublewedge$-sentence.
\end{example}

\begin{example}
Let $\tau = \{ < \}$ the language of linear orders.
The class $\mathbb{K}$ of non-well-founded linear orders is a $\PC$-class as
a linear order $(S,<)$ is non-well-founded if and only if there is a unary relation $U$ such that
\[ (\forall x)\big[ x \in U \rightarrow (\exists y)[y \in U \; \wedge \; y < x]\big].\]
$\mathbb{K}$ is not definable by any $\mc{L}_{\omega_1, \omega}$ formula.
\end{example}

\begin{example}
Let $\tau$ be any language and $\phi$ a $\tau$-sentence.
The class $\mathbb{K}$ of infinite models of $\phi$ is a $\PC$-class as $\mc{A} \models \phi$ is infinite if and only if there is a linear order $<$ on $\mc{A}$ such that $(\forall x)(\exists y) [ x < y]$.
$\mathbb{K}$ is also defined by the $\bigdoublewedge$-sentence
\[ \bigdoublewedge_{n \in \omega} (\exists x_0,\ldots,x_n) \left[ \bigwedge_{i \neq j} x_i \neq x_j \right].\]
\end{example}

\begin{example}\label{og-ex}
Orderable groups are a PC-class. They are also universally axiomatizable (in first-order logic) by saying that every finite subset can be ordered in a way that is compatible with the group operation.
\end{example}

Example \ref{og-ex} is a particular instance of a more general phenomena: if we take a $\PC$-class that such that (a) the expanded vocabulary only adds relations and (b) the added relations are only universally quantified over, then the resulting class is actually elementary (though it may require infinitely many axioms).

\begin{example}\label{ex:pc'}
There is a c.e.\ universal theory $T$ whose models do not form a $\PC$-class; by Theorem \ref{cor:comp ax gives pc'} they are, however, a $\PC'$-class. The language of $T$ will be the language of graphs. Fix an enumeration of the sentences $\phi_n$ in finite languages $\mc{L}_n$ expanding the language of graphs. Note that for every finite graph $G$, we can decide effectively whether there is an expansion of $G$ to a model of $\phi_n$. For each $n$, let $C_n$ be cycle of length $n$. Then, let $T$ be the theory that says that there is no cycle of length $n$ for exactly those $n$ where $C_n$ does not have an expansion to a model of $\phi_n$. Note that $T$ is c.e.\ and universal, and that it is different from each $\PC$-class.
\end{example}

%\begin{example}
%Let $\mc{L} = {R,\ldots}$ and let $\Delta$ be an $\mc{L}$-theory. Given $\mc{M} \models \Delta$, let $R^+(\mc{M})$ be the transitive closure of $R^{\mc{M}}$. We have $(x,y) \in R^+(\mc{M})$ if and only if there is $\pi \in \mc{M}^{<\omega}$ such that $\pi(0) = x$, $\pi(|\pi|-1) = y$, and for all $0 < i < |\pi|$, $(\pi(i-1),\pi(i)) \in R$. This is a $\Sigma_1$ formula.
%\end{example}
%
%\begin{example}
%Let $\mc{L} = {R,\ldots}$ and let $\Delta$ be an $\mc{L}$-theory. Given $\mc{M} \models \Delta$, let $R^{++}(\mc{M})$ be the relation which holds of those pairs $(a,b)$ for which there are at least two distinct paths from $a$ to $b$. We have $(x,y) \in R^+(\mc{M})$ if and only if there are $\pi \neq \tau \in \mc{M}^{<\omega}$ such that $\pi(0) = x$, $\pi(|\pi|-1) = y$, and for all $0 < i < |\pi|$, $(\pi(i-1),\pi(i)) \in R$ (and similarly for $\tau$). This is a $\Sigma_1$ formula. However, (we think) $R^{++}(\mc{M})$ is not definable using least fixed points.
%\end{example}

\section{An Application of Craig Interpolation}\label{sec:craig}

To prove the direction (1) implies (2) of Theorem \ref{thm:main}, we will adapt a proof of the Craig Interpolation Theorem for $\mc{L}_{\omega_1, \omega}$. The proof we adapt is not the original proof by Lopez-Escobar, but one that appears in the book by Marker \cite{Marker16}. We begin with a few preliminaries.

\begin{lemma}\label{lem:disj}
If $\varphi_1$ and $\varphi_2$ are $\bigdoublewedge$-formulas, $\varphi_1 \vee \varphi_2$ is equivalent to a $\bigdoublewedge$-formula.
\end{lemma}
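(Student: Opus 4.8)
The plan is to prove the lemma by induction, \emph{pushing the disjunction inward} through the syntactic structure of the two formulas until it reaches the quantifier-free level, where finite disjunctions are already permitted by the base clause in the definition of a $\bigdoublewedge$-formula. Concretely, I would argue by transfinite induction on (the natural sum of) the well-founded ranks of the formula codes of $\varphi_1$ and $\varphi_2$; this amounts to a double induction on the way the two formulas are built up. Since $\vee$ is symmetric, at each step I may assume the reduction is performed on $\varphi_1$, interchanging the roles of $\varphi_1$ and $\varphi_2$ whenever $\varphi_1$ happens to be quantifier-free.

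For the base case, if both $\varphi_1$ and $\varphi_2$ are (finitary) quantifier-free then $\varphi_1 \vee \varphi_2$ is again a finitary quantifier-free formula, hence a $\bigdoublewedge$-formula. For the inductive step I split on the outermost connective of $\varphi_1$. If $\varphi_1 = \bigdoublewedge_{i \in \omega} \psi_i$, I use the distributive equivalence
\[ \Big(\bigdoublewedge_{i \in \omega} \psi_i\Big) \vee \varphi_2 \;\equiv\; \bigdoublewedge_{i \in \omega} (\psi_i \vee \varphi_2). \]
Each $\psi_i$ has strictly smaller rank than $\varphi_1$, so the induction hypothesis gives a $\bigdoublewedge$-formula $\theta_i$ equivalent to $\psi_i \vee \varphi_2$; as $\varphi_1 \vee \varphi_2$ has only finitely many free variables, so does each $\theta_i$, and $\bigdoublewedge_{i} \theta_i$ is the desired $\bigdoublewedge$-formula.

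If $\varphi_1$ is $(\exists x)\psi$ or $(\forall x)\psi$, I first rename the bound variable $x$ to a variable $x'$ that does not occur free in $\varphi_2$ (possible since $\varphi_2$ has finitely many free variables); such a renaming preserves logical equivalence, the rank of the code, and the property of being a $\bigdoublewedge$-formula. I can then commute the quantifier past the disjunction via
\[ \big((\exists x')\psi'\big) \vee \varphi_2 \equiv (\exists x')(\psi' \vee \varphi_2), \qquad \big((\forall x')\psi'\big) \vee \varphi_2 \equiv (\forall x')(\psi' \vee \varphi_2), \]
both valid because $x'$ is not free in $\varphi_2$. By induction $\psi' \vee \varphi_2$ is equivalent to a $\bigdoublewedge$-formula $\theta$, and prefixing the quantifier to $\theta$ yields a $\bigdoublewedge$-formula equivalent to $\varphi_1 \vee \varphi_2$.

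\textbf{The main obstacle} is the quantifier case, and in particular the universal one: the renaming of bound variables is what makes the commutation legitimate, and the identity $\big((\forall x)\psi\big) \vee \chi \equiv (\forall x)(\psi \vee \chi)$ (for $x$ not free in $\chi$) deserves a careful check. It holds because, for a fixed valuation of the free variables, $\chi$ does not depend on $x$: if $\chi$ is true both sides are true, and if $\chi$ is false then $(\forall x)(\psi \vee \chi)$ forces $\psi$ to hold for every value of $x$, i.e.\ $(\forall x)\psi$. Everything else is routine bookkeeping to confirm that the rank strictly decreases at each step and that the finitely-many-free-variables and ``no infinite disjunction'' conditions are maintained.
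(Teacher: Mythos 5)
Your proof is correct and takes essentially the same route as the paper's: a simultaneous induction on how $\varphi_1$ and $\varphi_2$ are built up, distributing the disjunction over $\bigdoublewedge$ and commuting it past quantifiers after renaming the bound variable so that it is not free in the other disjunct. Your rank-of-formula-codes formulation and the explicit verification of $\big((\forall x)\psi\big) \vee \chi \equiv (\forall x)(\psi \vee \chi)$ are simply more detailed renderings of the steps the paper leaves implicit.
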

\begin{proof}
We argue by induction on the complexity of $\varphi_1$ and $\varphi_2$ together. If $\varphi_1$ and $\varphi_2$ are both finitary quantifier-free, then so is $\varphi_1 \vee \varphi_2$. For the inductive steps, we will give the argument which reduces the complexity of $\varphi_1$; the arguments for $\varphi_2$ are similar.

If $\varphi_1$ is of the form $(Q x) \varphi_1'(x)$, where $Q$ is either $\exists$ or $\forall$, then $\varphi_1 \vee \varphi_2$ is equivalent to $(Q v)[\varphi_1'(v) \vee \varphi_2]$ where $v$ is not free in $\varphi_2$; by the inductive hypothesis, $\varphi_1'(v) \vee \varphi_2$ is equivalent to a $\bigdoublewedge$-formula and so $(Q v)[\varphi_1'(v) \vee \varphi_2]$ is as well.

Finally, if $\varphi_1$ is of the form $\bigdoublewedge_{\phi \in X} \phi$, where each $\phi$ is a $\bigdoublewedge$-formula, then $\varphi_1 \vee \varphi_2$ is equivalent to $\bigdoublewedge_{\phi \in X} [\phi \vee \varphi_2]$, and by the induction hypothesis, each $\phi \vee \varphi_2$ is a $\bigdoublewedge$-formula.
\end{proof}

The proof of Craig Interpolation makes use of consistency properties. Consistency properties are the infinitary equivalent of Henkin-style constructions in finitary logic. The following definition, due to Makkai, is what we need to do to perform such a construction.

\begin{definition}[Definition 4.1 of \cite{Marker16}]
Let $C$ be a countable collection of new constants. A consistency property $\Sigma$ is a collection of countable sets $\sigma$ of $\mc{L}_{\omega_1, \omega}$-sentences with the following properties. For $\sigma \in \Sigma$:
\begin{enumerate}
\item if $\mu \subseteq \sigma$, then $\mu \in \Sigma$;
\item if $\phi \in \sigma$, then $\neg \phi \notin \sigma$;
\item if $\neg \phi \in \sigma$, then $\sigma \cup \{ \sim \phi\} \in \Sigma$;
\item if $\bigdoublewedge_{\phi \in X} \phi \in \sigma$, then for all $\phi \in X$, $\sigma \cup \{\phi\} \in \Sigma$;
\item if $\bigdoublevee_{\phi \in X} \phi \in \sigma$, then there is $\phi \in X$ such that $\sigma \cup \{\phi\}  \in \Sigma$;
\item if $(\forall v) \phi(v) \in \sigma$, then for all $c \in C$, $\sigma \cup \{\phi(c)\}  \in \Sigma$;
\item if $(\exists v) \phi(v) \in \sigma$, then there is $c \in C$ such that $\sigma \cup \{\phi(c)\} \in \Sigma$;
\item let $t$ be a term with no variables and let $c,d \in C$,
\begin{enumerate}
\item if $c = d \in \sigma$, then $\sigma \cup \{d = c\}  \in \Sigma$;
\item if $c = t \in \sigma$ and $\phi(t) \in \sigma$, then $\sigma \cup \{\phi(c)\} \in \Sigma$;
\item there is $e \in C$ such that $\sigma \cup \{e = t\} \in \Sigma$.
\end{enumerate}
\end{enumerate}
\end{definition}

\noindent A consistency property is in some sense a recipe for building a model.

\begin{theorem}[Model Existence Theorem; see Theorem 4.1.6 of \cite{Marker16}]
If $\Sigma$ is a consistency property and $\sigma \in \Sigma$, there is $\mc{M} \models \sigma$.
\end{theorem}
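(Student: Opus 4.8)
The plan is to prove this by a Henkin-style construction: I would extend $\sigma$ to a ``saturated'' set from which a model can be read off directly. Starting from $\sigma_0 = \sigma \in \Sigma$, I would build an increasing chain $\sigma_0 \subseteq \sigma_1 \subseteq \cdots$ of members of $\Sigma$ so that the union $\hat\sigma = \bigcup_n \sigma_n$ is a Hintikka-style set: consistent and closed under all of the defining clauses (3)--(8). Note that $\hat\sigma$ itself need not lie in $\Sigma$; what matters is only that each finite approximation does and that the closure demands are met in the limit. Consistency survives the limit, since if some $\phi$ and $\neg\phi$ both lay in $\hat\sigma$ they would both lie in a single $\sigma_n$, contradicting clause (2) for $\sigma_n \in \Sigma$.

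The heart of the construction is the bookkeeping guaranteeing that every closure demand is eventually discharged. Each clause lets me extend a member of $\Sigma$ by one sentence and stay in $\Sigma$: clause (3) supplies $\fneg\phi$ whenever $\neg\phi$ appears, clauses (5) and (7) supply a disjunct and an existential witness, clause (8) handles equality, and clauses (4) and (6) supply, one at a time, the conjuncts of a $\bigdoublewedge$ and the instances $\phi(c)$ of a universal. Because the last two demands are infinitary, I would dovetail: fixing a bijection $\mathbb{N} \to \mathbb{N} \times \mathbb{N}$ and maintaining the list $\psi_0, \psi_1, \ldots$ of sentences inserted so far, at stage $s \mapsto (m,k)$ I act on the $k$-th sub-demand of $\psi_m$ (its $k$-th conjunct, the instance $\phi(c_k)$, or the single witness when $k=0$), passing to some $\sigma_{s+1} \in \Sigma$. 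Since every inserted sentence receives an index $m$ and every pair $(m,k)$ is visited, all demands of all sentences in $\hat\sigma$ are met; the closed terms of the countable language $\tau \cup C$ form a fixed countable set, so clause (8c) can likewise be handled one term at a time.

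From $\hat\sigma$ I would then build $\mc{M}$ in the usual way. Clause (8) makes $c \sim d \iff (c = d) \in \hat\sigma$ a congruence on the closed terms, and clause (8c) guarantees every closed term is $\sim$-equivalent to a constant, so I may take the domain to be $C/{\sim}$ and interpret the function and relation symbols by membership of the corresponding atomic sentences in $\hat\sigma$, which clause (8) shows is well defined. The crux is the \emph{truth lemma}: for every sentence $\psi$, if $\psi \in \hat\sigma$ then $\mc{M} \models \psi$, proved by induction on the rank of $\psi$ (formulas being well-founded objects). The atomic and negated-atomic cases come from the definition of $\mc{M}$ together with consistency (clause (2)); the cases $\bigdoublewedge$, $\bigdoublevee$, and $\exists$ use closure under clauses (4), (5), (7); and the case $\forall v\, \phi$ uses clause (6) with (8c), since $\phi(c) \in \hat\sigma$ for every $c \in C$ and every element of $\mc{M}$ is named by some such $c$. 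The remaining case $\neg\phi$ with $\phi$ non-atomic is handled by passing to $\fneg\phi \in \hat\sigma$ via clause (3): this is logically equivalent to $\neg\phi$ but of strictly smaller rank, so the induction applies. As $\sigma \subseteq \hat\sigma$, this yields $\mc{M} \models \sigma$.

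The two substantive steps are also the two obstacles. The bookkeeping must interleave the infinitary demands so that \emph{every} instance of each universal and \emph{every} conjunct of each $\bigdoublewedge$ is eventually inserted even as new sentences keep appearing, with each single extension justified by a clause so that membership in $\Sigma$ is preserved. The truth lemma requires the induction to be genuinely well-founded and the negation case to strictly decrease rank, which is precisely why the formal negation $\fneg$, rather than $\neg$, is used throughout the closure conditions.
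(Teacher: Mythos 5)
The paper does not prove this theorem itself but cites Marker's Theorem 4.1.6, and your proposal is precisely that standard argument: a dovetailed Henkin-style chain through the consistency property whose union is a Hintikka set, followed by a term model on $C/{\sim}$ and a truth lemma proved by well-founded induction with the formal negation $\fneg$ handling the $\neg$ case. Your construction is correct and matches the cited proof in all essentials, including the two points you rightly flag as the substantive ones (the bookkeeping for the infinitary clauses (4) and (6), and the rank-decreasing use of $\fneg$).
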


We are now ready to prove our variant of the Craig Interpolation Theorem. We strengthen the hypotheses to assume that one of the sentences is a $\bigdoublewedge$-sentence, and in return, we get that the interpolant is also a $\bigdoublewedge$-sentence. The proof follows the same structure as that of the Craig Interpolation Theorem in \cite{Marker16} (Theorem 4.3.1).

\begin{theorem}
Suppose $\phi_1$ is a $\bigdoublewedge$-sentence and $\phi_2$ is an $\mc{L}_{\omega_1, \omega}$-sentence with $\phi_1 \models \phi_2$. There is a $\bigdoublewedge$-sentence $\theta$ such that $\phi_1 \models \theta$, $\theta \models \phi_2$, and every relation, function and constant symbol occurring in $\theta$ occurs in both $\phi_1$ and $\phi_2$.
\end{theorem}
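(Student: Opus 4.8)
The plan is to adapt, essentially clause for clause, the consistency-property proof of Craig interpolation for $\mc{L}_{\omega_1 \omega}$ in \cite{Marker16} (Theorem 4.3.1), while carrying along the extra bookkeeping that the separating sentence is always a $\bigdoublewedge$-sentence. Let $\tau_1$ and $\tau_2$ be the symbol sets of $\phi_1$ and $\phi_2$, let $\tau_0 = \tau_1 \cap \tau_2$, and adjoin a countable set $C$ of new constants. Call a pair $(\sigma_1, \sigma_2)$, with $\sigma_1$ a countable set of $(\tau_1 \cup C)$-sentences and $\sigma_2$ a countable set of $(\tau_2 \cup C)$-sentences, \emph{separable} if there is a $\bigdoublewedge$-sentence $\theta$ over $\tau_0 \cup C$ with $\sigma_1 \models \theta$ and $\sigma_2 \models \fneg \theta$, and \emph{inseparable} otherwise. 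Let $\Sigma$ be the collection of all unions $\sigma_1 \cup \sigma_2$ coming from inseparable pairs (omitting, as usual, infinitely many constants of $C$). Assuming toward a contradiction that no $\bigdoublewedge$-interpolant exists, the top pair $(\{\phi_1\}, \{\fneg \phi_2\})$ is inseparable --- any separator can be stripped of its auxiliary constants and turned into a genuine $\tau_0$-interpolant exactly as in the finitary argument --- so $\{\phi_1, \fneg \phi_2\} \in \Sigma$. Once $\Sigma$ is shown to be a consistency property, the Model Existence Theorem produces $\mc{M} \models \phi_1 \wedge \fneg \phi_2$, i.e.\ $\mc{M} \models \phi_1 \wedge \neg \phi_2$, contradicting $\phi_1 \models \phi_2$. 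So the real work is to verify the eight clauses of a consistency property for $\Sigma$ while checking that every separator we are forced to build remains a $\bigdoublewedge$-sentence.

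The organizing observation is that the left-hand set always stays inside the $\bigdoublewedge$-fragment: starting from the $\bigdoublewedge$-sentence $\phi_1$, every clause acting on a sentence of $\sigma_1$ replaces it by a subformula, an instance, or a formal negation of a negated atom, all again $\bigdoublewedge$-sentences in normal form. In particular \emph{no infinitary disjunction ever occurs in $\sigma_1$}. This matters because the one clause that forces us to combine separators is the disjunction clause: if $\bigdoublevee_{\phi \in X} \phi \in \sigma$ and for every $\phi \in X$ the extension $(\sigma_1, \sigma_2 \cup \{\phi\})$ were separable by some $\theta_\phi$, we must separate $(\sigma_1, \sigma_2)$. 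By the invariant this can only happen on the right, and there the correct combined separator is the \emph{conjunction} $\theta := \bigdoublewedge_{\phi \in X} \theta_\phi$, which is again a $\bigdoublewedge$-sentence. Indeed $\sigma_1 \models \theta_\phi$ for all $\phi$ gives $\sigma_1 \models \theta$; and since $\fneg \theta = \bigdoublevee_{\phi} \fneg \theta_\phi$ while $\sigma_2 \models \bigdoublevee_{\phi} \phi$, any model of $\sigma_2$ satisfies some $\phi$, hence $\sigma_2 \cup \{\phi\}$, hence $\fneg \theta_\phi$, hence $\fneg \theta$, so $\theta$ separates the original pair. Had $\bigdoublevee_{\phi}\phi$ been permitted in $\sigma_1$, the symmetric argument would have produced $\bigdoublevee_\phi \theta_\phi$, destroying the $\bigdoublewedge$-form; the hypothesis that $\phi_1$ is a $\bigdoublewedge$-sentence is exactly what rules this out.

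The remaining clauses preserve the $\bigdoublewedge$-form without difficulty. The universal, conjunction, and monotonicity clauses pass the given separator through unchanged (using $\sigma_i \models \phi(c)$, resp.\ $\sigma_i \models \phi$, resp.\ monotonicity of $\models$). For the existential clause one picks a fresh $c \in C$; if the extension is separable by $\theta(c)$, generalizing the fresh constant yields the separator $\exists v\, \theta(v)$ when the $\exists$ lies in $\sigma_1$ and $\forall v\, \theta(v)$ when it lies in $\sigma_2$, both $\bigdoublewedge$-sentences since $\theta$ is. The consistency clause is witnessed by $\phi$, $\neg \phi$, or a trivial (un)satisfiable $\bigdoublewedge$-sentence, all lying in $\tau_0 \cup C$; and the equality clauses go exactly as in \cite{Marker16}, again with no combination of separators and hence no new disjunctions. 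This completes the verification that $\Sigma$ is a consistency property and, with it, the proof.

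I expect the main obstacle to be precisely the disjunction clause and, underlying it, maintaining the invariant that $\sigma_1$ never leaves the $\bigdoublewedge$-fragment: ordinary interpolation builds the interpolant symmetrically out of both $\bigdoublewedge$'s and $\bigdoublevee$'s, and the entire content of this variant is the verification that, under the $\bigdoublewedge$-hypothesis on $\phi_1$, each separator one is forced to synthesize lands in the disjunction-free fragment. Once the left/right asymmetry is set up correctly, the other clauses and the removal of the auxiliary constants $C$ are routine adaptations of the standard proof.
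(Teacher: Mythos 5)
Your setup is essentially the paper's: the paper's consistency property consists of unions $\sigma_1 \cup \sigma_2$, with $\sigma_1$ a set of $\bigdoublewedge$-$\tau_1$-sentences, such that no $\bigdoublewedge$-sentence $\psi_1$ entailed by $\sigma_1$ is jointly unsatisfiable with a consequence $\psi_2$ of $\sigma_2$; your ``inseparable pairs'' condition is equivalent (take $\psi_2 = \fneg\theta$ in one direction and $\theta = \psi_1$ in the other), and your verifications of the subset, negation, conjunction, quantifier, and equality clauses, the right-hand disjunction clause, and the final stripping of the auxiliary constants from $C$ all match what the paper does.

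The gap is in the disjunction clause, which you yourself flag as the heart of the matter. You argue that by the invariant ($\sigma_1$ stays in the $\bigdoublewedge$-fragment) the clause for $\bigdoublevee_{\phi \in X}\phi \in \sigma$ ``can only happen on the right,'' and you assert that a left-hand occurrence would force the separator $\bigdoublevee_\phi \theta_\phi$ and destroy the $\bigdoublewedge$-form. But $\bigdoublewedge$-sentences \emph{do} contain disjunctions: their quantifier-free matrices are arbitrary finitary quantifier-free formulas, so \emph{finite} disjunctions occur in $\sigma_1$, and clause 5 must fire on them --- in $\mc{L}_{\omega_1\omega}$ syntax a finite $\vee$ is a $\bigdoublevee$ over a finite index set, and the Model Existence Theorem needs every sentence decomposed down to atomic and negated atomic sentences. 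Your invariant rules out only \emph{infinitary} disjunctions in $\sigma_1$, so the left-hand case of clause 5 does arise, with $X$ finite, and your proposal leaves it unhandled; worse, your parenthetical claim suggests it would be fatal. The paper treats exactly this case: when $\bigdoublevee_{\phi \in X}\phi \in \sigma_1$ it observes that $X$ is finite, runs the symmetric combination to obtain the separator $\bigdoublevee_{\phi \in X}\psi_{1,\phi}$, and invokes Lemma \ref{lem:disj} --- a finite disjunction of $\bigdoublewedge$-formulas is equivalent to a $\bigdoublewedge$-formula, proved by pushing the disjunction through quantifiers and through the infinitary conjunction --- to conclude that this separator stays in the fragment. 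So your instinct is right only for infinite $X$ (which indeed cannot occur on the left, and is why the hypothesis on $\phi_1$ matters); the finite disjunctive combination is unavoidable but harmless. Adding the closure-under-finite-disjunction lemma and the finite left-hand case repairs your argument, after which it coincides with the paper's proof.
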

\begin{proof}
Let $C$ be a countable collection of new constants. Let $\tau_i$ be the smallest language containing $\phi_i$ and $C$, and let $\tau = \tau_1 \cap \tau_2$.

Let $\Sigma$ be the collection of finite $\sigma$ containing only finitely many new constants that can be written as $\sigma = \sigma_1 \cup \sigma_2$, where $\sigma_1$ is a finite set of $\bigdoublewedge$-$\tau_1$-sentences and $\sigma_2$ is a finite set of $\tau_2$-sentences, and such that for all $\tau$-sentences $\psi_1$ and $\psi_2$, with $\psi_1$ a $\bigdoublewedge$-sentence, if $\sigma_1 \models \psi_1$ and $\sigma_2 \models \psi_2$ then $\psi_1 \wedge \psi_2$ is satisfiable.

In the rest of the proof, we make the convention that if $\sigma \in \Sigma$ and we write $\sigma = \sigma_1 \cup \sigma_2$, then $\sigma_1$ and $\sigma_2$ are the witnesses that $\sigma \in \Sigma$, i.e., $\sigma_1$ consists of $\bigdoublewedge$-$\tau_2$-sentences, $\sigma_2$ consists of $\tau_2$-sentences, and they satisfy the satisfiability condition above.

We claim that $\Sigma$ is a consistency property. The following claim will verify many of the conditions.

\begin{claimstar}
Fix $\sigma \in \Sigma$ and write $\sigma = \sigma_1 \cup \sigma_2$. If $\phi$ is a $\tau_i$-sentence (and a $\bigdoublewedge$-sentence if $i = 1$) with $\sigma_i \models \phi$, then $\sigma \cup \{\phi\} \in \Sigma$.
\end{claimstar}
\begin{proof}
We will show the case $i = 1$. We can write $\sigma \cup \{\phi \} = (\sigma_1 \cup \{\phi\}) \cup \sigma_2$. If $\sigma_1 \cup \{\phi\} \models \psi_1$ and $\sigma_2 \models \psi_2$, with $\psi_1$ a $\bigdoublewedge$-sentence, then since $\sigma_1 \models \phi$, $\sigma_1 \models \psi_1$. Hence $\psi_1 \wedge \psi_2$ is satisfiable.
\end{proof}

\noindent We now check the conditions of a consistency property.
\begin{enumerate}
\item If $\mu \subseteq \sigma$ with $\sigma \in \Sigma$, write $\mu = \mu_1 \cup \mu_2$ and $\sigma = \sigma_1 \cup \sigma_2$ where $\mu_1 \subseteq \sigma_1$ and $\mu_2 \subseteq \sigma_2$. Given $\mu_1 \models \psi_1$ and $\mu_2 \models \psi_2$, we have $\sigma_1 \models \psi_1$ and $\sigma_2 \models \psi_2$; hence $\psi_1 \wedge \psi_2$ is satisfiable. So $\mu \in \Sigma$.
\item If $\phi,\neg \phi \in \sigma = \sigma_1 \cup \sigma_2$, say $\phi,\neg \phi \in \sigma_i$, $\sigma_i \models \phi \wedge \neg \phi$ which is not satisfiable. The other possible case is that $\phi \in \sigma_i$, $\neg \phi \in \sigma_j$, $i \neq j$, in which case $\sigma_i \models \phi$ and $\sigma_j \models \neg \phi$, and $\phi \wedge \neg \phi$ is not satisfiable.
\item This follows from the claim.
\item This follows from the claim.
\item Write $\sigma = \sigma_1 \cup \sigma_2$. We have two cases which are different, depending on whether $\bigdoublevee_{\phi \in X} \phi \in \sigma_1$ or $\bigdoublevee_{\phi \in X} \phi \in \sigma_2$.

First suppose that $\bigdoublevee_{\phi \in X} \phi \in \sigma_2$. Let $\sigma_{2,\phi} = \sigma_2 \cup \{\phi\}$. We claim that for some $\phi \in X$, $\sigma_{2,\phi} \cup \sigma_1 \in \Sigma$. If not, then for each $\phi \in X$ there are $\tau$-sentences $\psi_{2,\phi}$ and $\psi_{1,\phi}$, with $\psi_{1,\phi}$ a $\bigdoublewedge$-sentence, such that $\sigma_{2,\phi} \models \psi_{2,\phi}$ and $\sigma_1 \models \psi_{1,\phi}$, and such that $\psi_{2,\phi} \wedge \psi_{1,\phi}$ is unsatisfiable. So $\psi_{2,\phi} \models \neg \psi_{1,\phi}$. Since
\[ \sigma_2 \models \bigdoublevee_{\phi \in X} \phi \]
we have that
\[ \sigma_2 \models \bigdoublevee_{\phi \in X} \psi_{2,\phi}.\]
On the other hand,
\[ \sigma_1 \models \bigdoublewedge_{\phi \in X} \psi_{1,\phi}. \]
This formula is a $\bigdoublewedge$-sentence as each $\psi_{1,\phi}$ is. Finally,
\[ \bigdoublevee_{\phi \in X} \psi_{2,\phi} \models \neg \bigdoublewedge_{\phi \in X} \psi_{1,\phi} \]
which contradicts that $\sigma \in \Sigma$.

Now suppose that $\bigdoublevee_{\phi \in X} \phi \in \sigma_1$; then $X$ is finite. We begin in a similar way as before. Let $\sigma_{1,\phi} = \sigma_1 \cup \{\phi\}$. We claim that for some $\phi \in X$, $\sigma_{1,\phi} \cup \sigma_2 \in \Sigma$. If not, there for each $\phi \in X$ there are $\tau$-sentences $\psi_{1,\phi}$ and $\psi_{2,\phi}$, with $\psi_{1,\phi}$ a $\bigdoublewedge$-sentence, such that $\sigma_{1,\phi} \models \psi_{1,\phi}$ and $\sigma_2 \models \psi_{2,\phi}$, and such that $\psi_{1,\phi} \wedge \psi_{2,\phi}$ is unsatisfiable. So $\psi_{1,\phi} \models \neg \psi_{2,\phi}$. Since
\[ \sigma_1 \models \bigdoublevee_{\phi \in X} \phi \]
we have that
\[ \sigma_1 \models \bigdoublevee_{\phi \in X} \psi_{1,\phi}.\]
As $X$ is finite, by Lemma \ref{lem:disj} this is equivalent to a $\bigdoublewedge$-sentence. On the other hand,
\[ \sigma_2 \models \bigdoublewedge_{\phi \in X} \psi_{2,\phi} \]
and
\[ \bigdoublevee_{\phi \in X} \psi_{1,\phi} \models \neg \bigdoublewedge_{\phi \in X} \psi_{2,\phi} \]
which contradicts that $\sigma \in \Sigma$.

\item This follows from the claim as $(\forall x) \phi(x) \models \phi(c)$ for all $c \in C$.
\item If $(\exists x) \phi(x) \in \sigma$, then choose $c \in C$ which does not appear in $\sigma$. Suppose that $(\exists x) \phi(x) \in \sigma_1$; the case where $(\exists x)\phi(x) \in \sigma_2$ is similar. We claim that $\sigma \cup \{\phi(c)\} \in \Sigma$. Suppose that $\sigma_1 \cup \{\phi(c)\} \models \psi_1$ and $\sigma_2 \models \psi_2$, where $\psi_1$ is a $\bigdoublewedge$-sentence. Write $\psi_1 = \theta_1(c)$ and $\psi_2 = \theta_2(c)$. We have $\sigma_1 \models \phi(c) \rightarrow \theta_1(c)$, and so since $c$ does not appear in $\sigma_1$, $\sigma_1 \models (\forall x)[\phi(x) \rightarrow \theta_1(x)]$. Similarly, $\sigma_2 \models (\forall x) \theta_2(x)$. Also, $\sigma_1 \models (\exists x) \phi(x)$ and so $\sigma_1 \models (\exists x) \theta_1(x)$. Since $(\exists x) \phi(x) \in \sigma_1$, $\phi(x)$ is a $\bigdoublewedge$-formula. So $(\exists x)\theta_1(x) \wedge (\forall x) \theta_2(x)$ is satisfiable, say in a model $\mc{M}$. Note that the constant $c$ does not appear in the formula $(\exists x)\theta_1(x) \wedge (\forall x) \theta_2(x)$, so we may choose the interpretation of $c$ in $\mc{M}$ such that $\mc{M} \models \theta_1(c)$. Then $\mc{M} \models \theta_1(c) \wedge \theta_2(c)$.
\item let $t$ be a term with no variables and let $c,d \in C$,
\begin{enumerate}
\item This follows from the claim.
\item Suppose $c=t \in \sigma$ and $\phi(t) \in \sigma$. Write $\sigma = \sigma_1 \cup \sigma_2$. Consider $\mu = \sigma \cup \{\phi(c)\} = \sigma_1 \cup \sigma_2 \cup \{ \phi(c) \}$. Suppose $c=t \in \sigma_i$ and $\phi(t) \in \sigma_j$. The case $i = j$ follows from the claim, so we consider the case $i \neq j$. Suppose that $\sigma_i \models \psi_i$ and $\sigma_j \cup \{\phi(c)\} \models \psi_j$. Then $\sigma_i \models c=t \wedge \psi_i$ and $\sigma_j \models c=t \rightarrow \psi_j$, so $c=t \wedge \psi_i \wedge (c=t \rightarrow \psi_i)$ is satisfiable. So $\psi_i \wedge \psi_j$ is satisfiable.
\item Pick $e \in C$ which does not appear in $\sigma = \sigma_1 \cup \sigma_2$. Then if $\sigma_1 \cup \{e = t\} \models \psi_1$ and $\sigma_2 \cup \{e = t\} \models \psi_2$, write $\psi_1 = \theta_1(e)$ and $\psi_2 = \theta_2(e)$. Then since $e$ does not appear in $\sigma_1$ or $\sigma_2$, $\sigma_1 \models \theta_1(t)$ and $\sigma_2 \models \theta_2(t)$. Thus $\theta_1(t) \wedge \theta_2(t)$ is satisfiable. Given a model of $\theta_1(t) \wedge \theta_2(t)$, setting the interpretation of $c$ to $t$, we get a model of $\psi_1 \wedge \psi_2$. So $\psi_1 \wedge \psi_2$ is satisfiable.
\end{enumerate}
\end{enumerate}

Since $\phi_1 \models \phi_2$, $\{\phi_1,\neg \phi_2\} \notin \Sigma$ as otherwise by the Model Existence Theorem there would be a model of $\phi_1 \wedge \neg \phi_2$. By definition of $\Sigma$, there are $\tau$-sentences $\psi_1$ and $\psi_2$, with $\psi_1$ a $\bigdoublewedge$-sentence, such that $\phi_1 \models \psi_1$, $\neg \phi_2 \models \psi_2$, and $\psi_1 \wedge \psi_2$ is not satisfiable. So we have that $\phi_1 \models \psi_1$, $\psi_1 \models \neg \psi_2$, and $\neg \psi_2 \models \phi_2$. Hence $\phi_1 \models \psi_1$ and $\psi_1 \models \phi_2$.

Thus $\psi_1$ is the desired interpolant, except that it may contain constants from $C$. Write $\psi_1 = \theta(\bar{c})$, where $\theta$ is an $\tau$-formula with no constants from $\bar{c}$. Neither $\phi_1$ nor $\phi_2$ contains constants from $C$, and so $\phi_1 \models (\forall \bar{x}) \theta(\bar{x})$ and $(\exists \bar{x}) \theta(\bar{x}) \models \phi_2$. Since $(\forall \bar{x}) \theta(\bar{x}) \models (\exists \bar{x}) \theta(\bar{x})$, we can take $(\forall \bar{x}) \theta(\bar{x})$ as the interpolant.
\end{proof}

We get the following corollary, which is (1) implies (2) of Theorem \ref{thm:main}. Interestingly, when we apply the interpolation theorem in the proof, one of the languages contains the other (i.e., we have $\tau_1 \supseteq \tau_2$ so that $\tau = \tau_1 \cap \tau_2 = \tau_2$). If it were not for our added assumptions on the form of the formulas involved, finding an interpolant would be trivial as we could just take the sentence in the smaller language.

\begin{corollary}\label{cor:both-to-pc}
Let $\mathbb{K}$ be a class of $\tau$-structures closed under isomorphism. If $\mathbb{K}$ is both a $\PC_{\Delta}$-class and $\mc{L}_{\omega_1, \omega}$-elementary, then it is defined by a $\bigdoublewedge$-sentence.
\end{corollary}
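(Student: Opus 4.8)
The plan is to feed the two hypotheses straight into the Variation on Craig Interpolation Theorem just established. From the $\PC_\Delta$ hypothesis I obtain a language $\tau^* \supseteq \tau$ and a first-order $\tau^*$-theory $T$ such that $\mathbb{K}$ is exactly the class of $\tau$-structures that expand to a model of $T$; from $\mc{L}_{\omega_1\omega}$-elementarity I obtain an $\mc{L}_{\omega_1\omega}(\tau)$-sentence $\psi$ with $\mathbb{K} = \{\mc{M} : \mc{M} \models \psi\}$. Since we work with countable languages, $T$ is a countable set of first-order sentences, so I may form the single $\mc{L}_{\omega_1\omega}(\tau^*)$-sentence $\phi_1 := \bigdoublewedge_{\phi \in T} \phi$. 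My first task is to confirm that $\phi_1$ is a $\bigdoublewedge$-sentence: each $\phi \in T$ is finitary, so in normal form it is built from literals using only finite disjunctions, finite conjunctions, and quantifiers, and hence is equivalent to a $\bigdoublewedge$-formula by Lemma \ref{lem:disj} (to absorb the disjunctions) together with closure under quantifiers and finite conjunction. The countable conjunction $\phi_1$ of these is then again a $\bigdoublewedge$-sentence.

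The crucial observation is that $\phi_1 \models \psi$. Indeed, if $\mc{M}^* \models \phi_1$, i.e.\ $\mc{M}^* \models T$, then the $\tau$-reduct $\mc{M}^* \upharpoonright \tau$ is a $\tau$-structure that expands to a model of $T$ (namely $\mc{M}^*$ itself), so $\mc{M}^* \upharpoonright \tau \in \mathbb{K}$ and therefore $\mc{M}^* \upharpoonright \tau \models \psi$; since $\psi$ is a $\tau$-sentence, its truth depends only on the $\tau$-reduct, whence $\mc{M}^* \models \psi$. Now I apply the Variation on Craig Interpolation Theorem with $\phi_1$ (a $\bigdoublewedge$-sentence) and $\phi_2 := \psi$ (an $\mc{L}_{\omega_1\omega}$-sentence) and the entailment $\phi_1 \models \phi_2$. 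This yields a $\bigdoublewedge$-sentence $\theta$ with $\phi_1 \models \theta$, $\theta \models \psi$, and every symbol of $\theta$ occurring in both $\phi_1$ and $\psi$. Because all symbols of $\psi$ lie in $\tau$, the interpolant $\theta$ uses only $\tau$-symbols, so $\theta$ is a $\bigdoublewedge$-$\tau$-sentence.

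It remains to check that $\theta$ defines $\mathbb{K}$. If $\mc{M} \in \mathbb{K}$, expand it to some $\mc{M}^* \models T = \phi_1$; then $\mc{M}^* \models \theta$, and since $\theta$ is a $\tau$-sentence, $\mc{M} = \mc{M}^* \upharpoonright \tau \models \theta$. Conversely, if $\mc{M} \models \theta$, then $\mc{M} \models \psi$ because $\theta \models \psi$, and hence $\mc{M} \in \mathbb{K}$. Combining the two directions, $\mc{M} \in \mathbb{K}$ iff $\mc{M} \models \theta$, so $\mathbb{K}$ is defined by the $\bigdoublewedge$-sentence $\theta$, as required.

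Since the interpolation theorem does the real work, I do not expect a serious obstacle here. The two points requiring care are verifying that a first-order theory is genuinely (equivalent to) a $\bigdoublewedge$-sentence — precisely where Lemma \ref{lem:disj} is needed to handle the finite disjunctions of first-order sentences — and tracking the languages through the interpolation so that $\theta$ emerges as a pure $\tau$-sentence. The entailment $\phi_1 \models \psi$, while immediate, is the linchpin of the argument, and it rests only on the invariance of the satisfaction of $\tau$-sentences under passing between a structure and its expansions.
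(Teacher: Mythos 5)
Your proposal is correct and follows essentially the same route as the paper: conjoin the $\PC_\Delta$-theory into a single $\bigdoublewedge$-sentence $\psi_1$, note that $\psi_1$ entails the $\mc{L}_{\omega_1\omega}(\tau)$-definition $\psi_2$ via the reduct argument, and apply the Variation on Craig Interpolation to extract a $\bigdoublewedge$-$\tau$-interpolant defining $\mathbb{K}$. Your extra verifications (that first-order sentences are $\bigdoublewedge$-formulas, and the explicit check of $\phi_1 \models \psi$) are details the paper leaves implicit, and they are sound.
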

\begin{proof}
Let $\tau^* \supseteq \tau$ be an expanded language and let $X$ be a set of first-order sentences such that $\mathbb{K}$ is the class of reducts to $\tau$ of models of $\psi_1 = \bigdoublewedge_{\phi \in X} \phi$. Note that $\psi_1$ is a $\bigdoublewedge$-sentence.

Let $\psi_2$ be an $\mc{L}_{\omega_1, \omega}(\tau)$-sentence defining $\mathbb{K}$. We have that $\psi_1 \models \psi_2$, so by the Interpolation Theorem, there is a $\bigdoublewedge$-$\tau$-sentence $\theta$ such that $\psi_1 \models \theta$ and $\theta \models \psi_2$.

Every $\mc{M} \in \mathbb{K}$ has an expansion which is a model of $\psi_1$ and hence is itself a model of $\theta$; and every model of $\theta$ is a model of $\psi_2$, and hence in the class $\mathbb{K}$. So $\theta$ defines $\mathbb{K}$.
\end{proof}

\section{The Skolem Argument}\label{sec:simple}

For the direction (2)$\Rightarrow$(1) of Theorem \ref{thm:main}, we must prove the following theorem. The proof works for sentences from $\mc{L}_{\kappa,\omega}$ for any $\kappa$.  The essence of this theorem is that a $\bigdoublewedge$-sentence of $\mc{L}_{\kappa, \omega}$ can be Skolemized to a first-order theory.

\begin{theorem}\label{thm:2-1}
	Let $\mathbb{K}$ be a class of structures closed under isomorphism. If $\mathbb{K}$ is defined by a $\bigdoublewedge$-theory of $\mc{L}_{\kappa,\omega}$, then it is a pseudo-elementary (PC$_{\Delta}$) class.
\end{theorem}
\begin{proof}
	We will argue by induction on the complexity of formulas that for every $\bigdoublewedge$-formula $\varphi(\bar{x})$, there is a collection of elementary first-order formulas $\Phi(\varphi)$ in an expanded language $\tau_\varphi$ such that for every $\mc{A} \in \mathbb{K}$ there is an expansion $\mc{A}^+$ of $\mc{A}$ to $\tau_\varphi$ such that, for all $\bar{a} \in A$, we have
	$$\mc{A} \models \varphi(\bar{a}) \iff \forall \varphi' \in \Phi(\varphi), \mc{A}^+ \models \varphi'(\ba)$$
	
	Indeed, suppose that $\varphi(\bar{x})$ is a $\bigdoublewedge$-formula. Then either $\varphi(\bar{x})$ is already an elementary first-order formula (in which case $\Phi(\varphi) = \{\varphi\}$), or it is of one of the following forms:
	\begin{enumerate}
		\item $\varphi \equiv (\exists y) \psi(\bar{x},y)$.
		
		Take $\Phi(\varphi) = \{\theta(\bar{x},f_\theta(\bar{x})) \mid \theta \in \Phi(\psi) \}$ where the $f_\theta$ are new function symbols.
		
		\item $\varphi \equiv (\forall y) \psi(\bar{x},y)$.
		
		Take $\Phi(\varphi) = \{ (\forall y) \theta(\bar{x},y) \mid \theta \in \Phi(\psi) \}$.
		
		\item $\varphi \equiv \bigdoublewedge_{i \in I} \psi_i(\bar{x})$.
		
		Take $\Phi(\varphi) = \bigcup_{i\in I} \Phi(\psi_i)$
	\end{enumerate}
Then showing that $\Phi(\varphi)$ has the desired property is easy by induction. For $\varphi$ in case (2) or (3), the expansion $\mc{A}^+$ of $\mc{A}$ is inherited from the subformulas. For $\varphi$ in case (1), in addition to the inherited expansion, the $f^{\mc{A}^+}_\theta(\bar{a})$ still need to be defined. For $\bar{a}$ such that $\mc{A} \models \varphi(\bar{a})$, choose $b$ such that $\mc{A} \models \psi(\bar{a},b)$, and set  $f^{\mc{A}^+}_\theta(\bar{a})=b$ for all $\theta \in \Phi(\psi)$. For $\bar{a}$ such that $\mc{A} \not \models \varphi(\bar{a})$, choose any $b \in A$ and set  $f^{\mc{A}^+}_\theta(\bar{a})=b$ for all $\theta \in \Phi(\psi)$.

%we prove case (2).  Suppose that $\varphi \equiv (\forall y) \psi(\bar{x},y)$ and we have constructed $\Phi(\psi)$ with the desired property.  Then $\tau_\varphi = \tau_\psi$, so set $\mc{A}^+$ to be the same expansion and let $\bar{a} \in A$.  Then we have the following:
%	\begin{eqnarray*}
%	\mc{A} \models \varphi(\bar{a}) &\iff& \forall b \in A, \mc{A} \models \psi(b, \bar{a})\\
%	&\iff& \forall b \in A, \forall \psi' \in \Phi(\psi), \mc{A}^+ \vDash \psi'(b, \bar{a})\\
%	&\iff& \forall \psi' \in \Phi(\psi), \mc{A}^+ \models \forall x \psi'(x, \bar{a})\\
%	&\iff& \forall \varphi'\in \Phi(\varphi), \mc{A}^+ \models \phi'(\bar{a})
%	\end{eqnarray*}
	%It is easy to argue inductively that $\mc{A} \models \varphi(\bar{a})$ if and only if there is an expansion $\mc{A}^+$ to an expanded language such that for all $\psi \in \Phi(\varphi)$, $\mc{A}^+ \models \psi(\bar{a})$.
	
	Note that the languages constructed can be assumed to be disjoint for different sentences, so there is no extra difficulty in starting with a theory, rather than a sentence.
\end{proof}

\section{Game Formulas}\label{sec:game}

In this section, we show how the direction (2)$\Rightarrow$(1) of Theorem \ref{thm:main} follows from known results on game formulas.

\begin{definition}
	A closed game formula is an expression of the form
	\[ \forall y_1 \exists z_1 \forall y_2 \exists z_2 \cdots \bigdoublewedge_{n} \varphi_n(\bar{x},y_1,z_1,y_2,z_2,\ldots)\]
	where each $\varphi_n$ is an elementary first-order formula. Such a formula is computable if the sequence $\varphi_n$ is computable.
\end{definition}

\noindent Satisfaction for such formulas is defined by a game played between two players, with player I playing the $\forall$ quantifiers and player II playing the $\exists$ quantifiers; player II wins, and the formulas is satisfied, if he can make $\varphi_n(\bar{x},y_1,z_1,\ldots)$ true for every $n$.  Alternatively, satisfaction can be defined by the existence of Skolem functions (which turn out to be the winning strategies for player II).

Note that each $\varphi_n$ has finitely many free variables.  Also, the `closed' adjective refers to use of conjunctions in the formula.

Every (computable) $\bigdoublewedge$-formula is equivalent to a (computable) closed game formula by moving all of the quantifiers to the front. So we can get we get the direction (2)$\Rightarrow$(1) of Theorem \ref{thm:main} as well as Theorem \ref{thm:comp} as corollaries of the following theorems, respectively.

\begin{theorem}\cite[Theorem 2.1.4]{Kolaitis}\label{thm:Kolaitis}
 Any class of $\tau$-structures defined by a closed game formula is $\PC_\Delta$.
\end{theorem}
\begin{theorem}\cite[Corollary 6.6.7]{Barwise}
		Any class of $\tau$-structures defined by a computable closed game formula is $\PC'$. 	
\end{theorem}

\noindent The proof given in the previous section is, however, much simpler.  Indeed, the proof in Section \ref{sec:simple} gives a proof of Theorem \ref{thm:Kolaitis} because the Skolem functions for closed game formulas are still finitary functions because each stage of the game has only finitely many plays before it (and because each of the formulas $\varphi_n$ has finitely many free variables).  This proof could be further generalized to consider longer games, showing that any class defined by a higher analogue of closed game formulas is $\PC$ in some infinitary logic $\mc{L}_{\kappa, \lambda}$.

\bibliography{References}
\bibliographystyle{alpha}

\end{document}